\def\polk#1{\setbox0=\hbox{#1}{\ooalign{\hidewidth\lower1.5ex\hbox{`}\hidewidth\crcr\unhbox0}}}
\newtheorem{Theorem}{Theorem}
\newtheorem{Lemma}{Lemma}
\newtheorem{Proposition}{Proposition}
\theoremstyle{definition}
\theoremstyle{remark}
\newtheorem{Remark}{Remark}[section]
\newcommand{\intav}[1]{\mathchoice {\mathop{\vrule width 6pt height 3 pt depth -2.5pt\kern -8pt \intop}\nolimits_{\kern -6pt#1}} {\mathop{\vrule width5pt height 3 pt depth -2.6pt \kern -6pt \intop}\nolimits_{#1}}{\mathop{\vrule width 5pt height 3 pt depth -2.6pt \kern -6pt\intop}\nolimits_{#1}} {\mathop{\vrule width 5pt height 3 pt depth-2.6pt \kern -6pt \intop}\nolimits_{#1}}}
\numberwithin{equation}{section}
\title[Hessian-dependent functionals]{Improved regularity for a\\ Hessian-dependent functional}
\author[V. Bianca]{Vincenzo Bianca}
\address{University of Coimbra, CMUC, Department of Mathematics, 3001-501 Coimbra, Portugal}{}
\email{vincenzo@mat.uc.pt}
\author[E. A. Pimentel]{Edgard A. Pimentel}
\address{University of Coimbra, CMUC, Department of Mathematics, 3001-501 Coimbra, Portugal and Pontifical Catholic University of Rio de Janeiro -- PUC-Rio, 22451-900 G\'avea, Rio de Janeiro-RJ, Brazil}{}
\email{edgard.pimentel@mat.uc.pt}
\author[J.M. Urbano]{Jos\'{e} Miguel Urbano}
\address{King Abdullah University of Science and Technology (KAUST), Computer, Electrical and Mathematical Sciences and Engineering Division (CEMSE), Thuwal 23955-6900, Saudi Arabia and University of Coimbra, CMUC, Department of Mathematics, 3001-501 Coimbra, Portugal}{} 
\email{miguel.urbano@kaust.edu.sa} 
\date{\today}
\begin{document}
	
\begin{abstract}
We prove that minimizers of the $L^{d}$-norm of the Hessian in the unit ball of $\mathbb{R}^d$ are locally of class $C^{1,\alpha}$. Our findings extend previous results on Hessian-dependent functionals to the borderline case and resonate with the H\"older regularity theory available for elliptic equations in double-divergence form.
\end{abstract}

\keywords{Hessian-dependent functionals; improved regularity in H\"older spaces.}
	
\subjclass{35B38; 49N60; 49Q20.}
	
\maketitle
	
\section{Introduction}\label{sec_mollybloom}
	
We consider the Hessian-dependent functional $I:W^{2,d}(B_1)\to\mathbb{R}$ given by
\begin{equation}\label{eq_main1}
	I(u):=\int_{B_1}\left|D^2u\right|^d\,{\rm d}x,
\end{equation}
where $B_1\subset\mathbb{R}^d$ is the unit ball in the Euclidean space $\mathbb{R}^d$, and examine the regularity of minimizers for $I$ in H\"older spaces. In particular, we prove that minimizers are locally of class $C^{1,\alpha}$, with estimates.

Hessian-dependent functionals appear in various disciplines, in the realms of differential geometry, the calculus of variations, the mechanics of solids and mean-field games theory; see, for instance,  \cite{AliceChang1, AliceChang2, Wang1, Wang2, Oh93, SW03, Bhattacharya_2022, kohn2, kohn3, venkataramani, kohnicm, avilesgiga2, avilesgiga1}. A fundamental example concerns the model-problem
\[
	I_\Delta(u):=\int_{B_1}\left[{\rm Tr}(D^2u)\right]^2{\rm d}x = \int_{B_1}\left|\Delta u\right|^2{\rm d}x,
\]
whose first compactly supported variation yields the biharmonic operator and drives the so-called biharmonic maps. The functional $I_\Delta$ resonates in the analysis of conformally invariant energies since it is conformally invariant in dimension $d=4$. For an analysis of biharmonic maps targeting the $m$-dimension sphere $S^m$, we refer the reader to \cite{AliceChang1, AliceChang2}. See also \cite{Wang1, Wang2} for biharmonic maps into Riemannian manifolds. 

In the context of differential geometry, Hessian-dependent functionals arise in the study of Lagrangian surfaces minimizing the area
\begin{equation}\label{eq_slimani}
	A(u):=\int_{B_1}\sqrt{I_d+(D^2u)^2}\,{\rm d}x,
\end{equation}
where $I_d$ is the identity matrix of order $d$. The Euler-Lagrange equation associated with the functional $A$ is the double-divergence form pde
\[
	\left(\sqrt{{\rm det}(I_d+(D^2u)^2}(I_d+(D^2u)^2)_{i,j}\delta^{k,\ell}u_{i,k}\right)_{x_jx_\ell}=0\hspace{.3in}\mbox{in}\hspace{.3in}B_1,
\]
where $\delta^{k,\ell}$ is Kronecker's delta. For results on \eqref{eq_slimani}, we mention \cite{Oh93,SW03}. We also highlight the set of results put forward in \cite{Bhattacharya_2022}. In that paper, the author considers
\begin{equation}\label{eq_blackwell}
	L(u):=\int_{B_1}F(D^2u)\,{\rm d}x,
\end{equation}
where $F$ is convex and smooth, and its main contribution concerns the regularity of minimizers. Indeed, it is proven therein that if $u\in W^{2,\infty}_{\rm loc}(B_1)$ minimizes the energy in \eqref{eq_blackwell} and its Hessian satisfies a small-oscillation condition, then $u$ is of class $C^{2,\alpha}$. The argument here resembles the proof of the Evans-Krylov theory put forward in \cite[Chapter 6]{ccbook}. Once $C^{2,\alpha}$-regularity is available, the author proceeds by proving that solutions are indeed in $C^{\infty}_{\rm loc}(B_1)$.

One also finds applications of Hessian-dependent functionals in the context of the mechanics of solids \cite{kohnicm}. The usual examples concern energy-driven pattern formation and nonlinear elasticity. Typically, these models examine wrinkles appearing in twisted ribbon or blister patterns in thin films on compliant substrates \cite{kohn2, kohn3,venkataramani}. 

In the context of the calculus of variations, Hessian-dependent functionals also play a role. The work-horse of the theory is the so-called Aviles-Giga functional \cite{avilesgiga2,avilesgiga1}, given by
\begin{equation}\label{eq_bodleian}
	G_\varepsilon(u):=\int_{B_1}\varepsilon^{-1}\left(1-\left|Du\right|^2\right)^2+\varepsilon|D^2u|^2\,{\rm d}x,
\end{equation}
for $\varepsilon>0$. This functional can be regarded as a natural generalization of the Modica-Mortola functional to the context of higher-order terms (see \cite{Modica-Mortola}). In addition, it relates to the distance function (to the boundary of a domain) and, naturally, with the solutions of the eikonal equation. Indeed, in \cite{JaOtPe} the authors replace the unit ball with a general, bounded domain $\Omega\subset \mathbb{R}^d$ and show that if there exist sequences $(u_n)_{n\in\mathbb{N}}$ and $(\varepsilon_n)_{n\in\mathbb{N}}$ satisfying
\[
	\int_{\Omega}\varepsilon_n^{-1}\left(1-\left|Du_n\right|^2\right)^2+\varepsilon_n|D^2u_n|^2\,{\rm d}x\longrightarrow 0
\]
as $\varepsilon\to 0$, then $\Omega$ has to be a ball. Moreover, they conclude that 
\[
	\lim_{n\to\infty} u_n(x)={\rm dist}(x,\partial \Omega).
\]
The functional $G_\varepsilon$ also appears in connection with problems in thin film blisters \cite{GiOr97} and liquid crystals \cite{avilesgiga2}.

The regularity of the minimizers for a functional similar to \eqref{eq_main1} was studied in \cite{Andrade_Pimentel2021}. In that article, the authors examine functionals of the form
\[
	J(u):=\int_{B_1}\left|F(D^2u)\right|^p\,{\rm d}x,
\] 
where $F:\mathcal{S}(d)\sim\mathbb{R}^\frac{d(d+1)}{2}\to\mathbb{R}$ satisfies the condition 
\[
	\frac{1}{\lambda}\left|M\right|\leq F(M)\leq \lambda\left|M\right|,
\]
for every symmetric matrix $M\in S(d)$, and some constant $\lambda>1$. Under the assumption $p>d$, they prove the gradient of minimizers is $C^{0,(p-d)/(p-1)}$-regular, with estimates. However, the case $p=d$ falls off the scope of the results in \cite{Andrade_Pimentel2021}, and we treat it here, for simplicity of exposition, for the model case $F=I_d$ (see Remark \ref{tfa+yy}).

We examine the functional in \eqref{eq_main1} and establish a regularity result for minimizers $u\in W^{2,d}(B_1)\cap W^{1,d}_{g}(B_1)$, where $g\in W^{2,d}(B_1)$ is a boundary condition attained in the Sobolev sense. Our main result reads as follows.

\begin{Theorem}[$C^{1,\alpha}$-regularity estimates]\label{thm_ll}
Let $u\in W^{2,d}(B_1)\cap W^{1,d}_g(B_1)$ be a minimizer for \eqref{eq_main1}, where $g\in W^{2,d}(B_1)$ is given. Then there exists $\alpha\in(0,1)$ such that $u\in C_{{\rm loc}}^{1,\alpha}(B_1)$. In addition, there exists a constant $C>0$, depending only on the dimension $d$, such that 
\[
	[Du]_{C^{0,\alpha}(B_{1/2})}\leq C.
\]
\end{Theorem}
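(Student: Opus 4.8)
The plan is to combine minimality of $u$ with a self-improving integrability estimate for the Hessian and then to read off H\"older continuity of the gradient from a subcritical Sobolev embedding. Since the problem is homogeneous and the class of minimizers is stable under $u\mapsto\lambda u$, I would first normalize $\|u\|_{W^{2,d}(B_1)}\le1$, so that all constants below depend only on $d$. The statement then reduces to the higher-integrability claim $D^2u\in L^{d+\epsilon}_{\rm loc}(B_1)$ for some $\epsilon=\epsilon(d)>0$: once this is known, $u\in W^{2,d+\epsilon}_{\rm loc}(B_1)$ and, because $d+\epsilon>d=\dim$, the Morrey--Sobolev embedding $W^{2,d+\epsilon}\hookrightarrow C^{1,\,1-\frac{d}{d+\epsilon}}$ gives $u\in C^{1,\alpha}_{\rm loc}(B_1)$ with $\alpha:=\frac{\epsilon}{d+\epsilon}\in(0,1)$ and $[Du]_{C^{0,\alpha}(B_{1/2})}\le C\|D^2u\|_{L^{d+\epsilon}(B_{3/4})}$.

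The core step is a Caccioppoli-type inequality with a \emph{subcritical} right-hand side. Fix $B_r\Subset B_1$, set $q:=(Du)_{B_r}$, and let $\ell(x):=u_{B_r}+q\cdot(x-x_{B_r})$ be the associated first-order (averaged) Taylor polynomial. For $r/2\le\rho<s\le r$ pick a cutoff $\eta\in C^\infty_c(B_s)$ with $\eta\equiv1$ on $B_\rho$, $|D\eta|\le C/(s-\rho)$, $|D^2\eta|\le C/(s-\rho)^2$, and compare $u$ with the admissible competitor $w:=u-\eta^2(u-\ell)$, which equals $u$ near $\partial B_1$ and satisfies $D^2w\equiv0$ on $B_\rho$ (there $w=\ell$). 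Expanding $D^2w$ by the Leibniz rule on $B_s\setminus B_\rho$ and using $I(u)\le I(w)$, then filling the hole and iterating over $\rho\in[r/2,r]$ to absorb the resulting $\int_{B_s\setminus B_\rho}|D^2u|^d$ term, one arrives at
\[
\int_{B_{r/2}}|D^2u|^d\,{\rm d}x\ \le\ \frac{C}{r^d}\int_{B_r}|Du-q|^d\,{\rm d}x\ +\ \frac{C}{r^{2d}}\int_{B_r}|u-\ell|^d\,{\rm d}x .
\]
Now the borderline exponent $p=d$ enters: by the Sobolev--Poincar\'e inequality at the critical exponent, $W^{1,d/2}(B_r)\hookrightarrow L^{d}(B_r)$, one has $\|Du-q\|_{L^d(B_r)}\le C\|D^2u\|_{L^{d/2}(B_r)}$, and, with one more Poincar\'e step, $\|u-\ell\|_{L^d(B_r)}\le Cr\|D^2u\|_{L^{d/2}(B_r)}$. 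Inserting these bounds, dividing through, and passing to averages (using $|B_r|\sim|B_{r/2}|\sim r^d$) yields the reverse-H\"older inequality
\[
\intav{B_{r/2}}|D^2u|^d\,{\rm d}x\ \le\ C\left(\,\intav{B_r}|D^2u|^{d/2}\,{\rm d}x\right)^{2},\qquad B_r\Subset B_1 .
\]

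With this, Gehring's lemma applied to $f:=|D^2u|^{d/2}\in L^2_{\rm loc}(B_1)$ yields $\delta=\delta(d)>0$ with $f\in L^{2+\delta}_{\rm loc}(B_1)$, i.e.\ $D^2u\in L^{d+\epsilon}_{\rm loc}(B_1)$ for $\epsilon:=d\delta/2$, together with $\|D^2u\|_{L^{d+\epsilon}(B_{3/4})}\le C\|D^2u\|_{L^d(B_1)}\le C$. Plugging this into the Morrey--Sobolev step from the first paragraph closes the argument and gives $[Du]_{C^{0,\alpha}(B_{1/2})}\le C$ with $\alpha$ and $C$ depending only on $d$.

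The crux --- and the only genuinely delicate point --- is the reverse-H\"older inequality: producing the subcritical power $d/2<d$ on its right-hand side is precisely what is at stake at the borderline value $p=d$, and it hinges on using \emph{minimality} (rather than the Euler--Lagrange equation, a fourth-order pde in double-divergence form) to annihilate $D^2u$ on the inner ball and then on invoking the Sobolev--Poincar\'e inequality \emph{exactly} at its critical integrability exponent; for $p>d$ one would instead run a comparison/geometric-iteration scheme producing an explicit exponent that degenerates as $p\downarrow d$. What remains is the routine bookkeeping ensuring that $C$, $\epsilon$, and $\alpha$ depend only on the dimension.
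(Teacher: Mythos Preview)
Your argument is correct and takes a genuinely different route from the paper. The paper tests the Euler--Lagrange equation \eqref{eq_weakform} against $v=\zeta^d\bigl(u-(u)_{B_{2R}\setminus B_R}\bigr)$ with a carefully designed cutoff satisfying $|D\zeta|^2/|\zeta|\le C$, then uses Widman's hole-filling technique to obtain the Morrey-type decay $\int_{B_r}|D^2u|^d\le Cr^\beta$ directly, and closes via Morrey's characterization of H\"older continuity (Proposition~\ref{morrey}). You instead exploit minimality through comparison with an affine competitor, derive a Caccioppoli inequality with subcritical right-hand side, recognize that the Sobolev--Poincar\'e exponent $(d/2)^*=d$ lands exactly on the reverse-H\"older inequality needed for Gehring's lemma, and conclude via higher integrability $D^2u\in L^{d+\epsilon}_{\rm loc}$ and the supercritical embedding $W^{2,d+\epsilon}\hookrightarrow C^{1,\alpha}$. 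Your approach yields the slightly stronger intermediate conclusion $u\in W^{2,d+\epsilon}_{\rm loc}(B_1)$ and avoids the special cutoff construction, while the paper's approach works verbatim for weak solutions of the Euler--Lagrange equation (not only minimizers) and connects more transparently with the double-divergence form structure emphasized in the introduction. Both hinge on the same borderline phenomenon --- that at $p=d$ one can still squeeze out a small gain, either as a decay exponent $\beta>0$ or as an integrability gain $\epsilon>0$ --- but reach it by different mechanisms.
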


The proof is based on testing the Euler-Lagrange equation associated with the functional \eqref{eq_main1} against a suitable test function built upon a smooth cut-off satisfying uniform bounds up to its second derivatives. This allows us to establish a uniform decay rate for the $L^d$-norm of the Hessian of minimizers in balls of comparable radii, by extending Widman's \emph{hole-filling technique} (see \cite{Widman}) to deal with the difficulties posed by the presence of second-order derivatives.  Once the information on the decay is available, an application of Morrey's characterization of H\"older continuity completes the proof.

The remainder of this article is organized as follows. In Section \ref{sec_blenheim} we gather preliminary material used in the paper, including a discussion on the existence and uniqueness of minimizers for \eqref{eq_main1}. The proof of Theorem \ref{thm_ll} is the subject of Section \ref{sec_abel}.

\section{Preliminaries}\label{sec_blenheim}

In the sequel, we state our problem rigorously, recall preliminary ingredients and comment on the existence and uniqueness of minimizers.
	
Let $B_1\subseteq\mathbb{R}^d$ denote the unit ball in $\mathbb{R}^d$, and fix $g\in W^{2,d}(B_1)$. Set $\mathcal{A}=W^{2,d}(\Omega)\cap W_g^{1,d}(\Omega)$, where
	\[
		W^{1,d}_g(B_1):=\left\lbrace u\in W^{1,d}(B_1)\,|\,u-g\in W^{1,d}_0(B_1)\right\rbrace.
	\]
	Let $I:\mathcal{A}\to\mathbb{R}$ be defined as
	\begin{equation*}
		I(w)=\int_{B_1}|D^2w|^d\,{\rm d}x.
	\end{equation*}
	We consider the problem of finding $u\in\mathcal{A}$ such that
	\begin{equation}\label{1}
		I(u)=\min_{w\in\mathcal{A}}I(w).
	\end{equation}

We notice the first compactly supported variation of the functional $I(w)$ yields the fourth-order Euler-Lagrange equation 
\begin{equation}\label{eq_eulerlagrange}
	\left(|D^2u|^{d-2}\frac{\partial^2 u}{\partial_{x_i}\partial_{x_j}}\right)_{x_ix_j}=0\hspace{.3in}\mbox{in}\hspace{.3in}B_1.
\end{equation}
The weak form of \eqref{eq_eulerlagrange} is given by
\begin{equation}\label{eq_weakform}
	\int_{B_1}|D^2u|^{d-2}D^2u:D^2\varphi\,{\rm d}x=0\hspace{.3in}\forall\varphi\in C^\infty_c(B_1),
\end{equation}
where, for matrices $A:=\left(a_{i,j}\right)_{i,j=1}^d$ and $B:=\left(b_{i,j}\right)_{i,j=1}^d$, the operation $A:B$ stands for
\[
	A:B:=\sum_{i,j=1}^da_{i,j}b_{i,j}.
\]
Compare \eqref{eq_eulerlagrange} and \eqref{eq_weakform} with the fourth-order model studied in \cite{Bhattacharya_2022}. 

We proceed by recalling a preliminary result used further in the paper.	

 \begin{Lemma}\label{lem_stima119}
		Fix $R_0>0$ and let $\phi:[0,R_0]\to[0,\infty)$ be a non-decreasing function. Suppose there exist constants $C_1,\alpha,\beta>0$, and $C_2,\mu\ge0$, with $\beta<\alpha$, satisfying
		\begin{equation*}
			\phi(r)\le C_1\Big[\Big(\frac{r}{R}\Big)^{\alpha}+\mu\Big]\phi(R)+C_2R^{\beta},
		\end{equation*}
		for every $0<r\le R\le R_0$.
		Then, for every $\sigma\le\beta$, there exists $\mu_0=\mu_0(C_1,\alpha,\beta,\sigma)$ such that, if $\mu<\mu_0$, for every $0<r\le R\le R_0$, we have
		\begin{equation*}
			\phi(r)\le C_3\Big(\frac{r}{R}\Big)^{\sigma}\big(\phi(R)+C_2R^{\sigma}\big),
		\end{equation*}
		where $C_3=C_3(C_1,\alpha,\beta,\sigma)>0$. Moreover,
		\begin{equation*}
			\phi(r)\le C_4r^{\sigma},
		\end{equation*}
		where $C_4=C_4(C_2,C_3,R_0,\phi(R_0),\sigma)$.
	\end{Lemma}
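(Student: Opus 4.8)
The plan is to treat this as a Campanato-type iteration: one iterates the hypothesis along a geometric sequence of radii $\tau^{k}R$, $k\in\mathbb{N}$, for a suitably small ratio $\tau\in(0,1)$, after first fixing $\mu_{0}$ small enough that the defect term $C_{1}\mu\,\phi(R)$ gets absorbed into a genuine geometric contraction. Concretely, I would fix an auxiliary exponent $\gamma$ with $\beta<\gamma<\alpha$ (available since $\beta<\alpha$), choose $\tau=\tau(C_{1},\alpha,\gamma)\in(0,1)$ so that $2C_{1}\tau^{\alpha}\le\tau^{\gamma}$ — possible because $\tau^{\alpha-\gamma}\to 0$ as $\tau\to 0^{+}$ — and set $\mu_{0}:=\tau^{\gamma}/(2C_{1})$. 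Then, for $\mu<\mu_{0}$, applying the hypothesis with $r$ replaced by $\tau R$ gives
\[
\phi(\tau R)\le C_{1}\big(\tau^{\alpha}+\mu\big)\phi(R)+C_{2}R^{\beta}\le\tau^{\gamma}\phi(R)+C_{2}R^{\beta},\qquad 0<R\le R_{0}.
\]

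The next step is to iterate this estimate. Replacing $R$ by $\tau^{k-1}R$ and arguing by induction on $k\ge 1$, one obtains
\[
\phi(\tau^{k}R)\le\tau^{k\gamma}\phi(R)+C_{2}R^{\beta}\sum_{i=0}^{k-1}\tau^{i\gamma}\tau^{(k-1-i)\beta}.
\]
Since $\gamma>\beta$, the sum is dominated by $\tau^{(k-1)\beta}\sum_{i\ge 0}\tau^{i(\gamma-\beta)}=\tau^{(k-1)\beta}\big(1-\tau^{\gamma-\beta}\big)^{-1}$, and using also $\tau^{k\gamma}\le\tau^{k\beta}$ this yields
\[
\phi(\tau^{k}R)\le C\,\tau^{k\beta}\big(\phi(R)+C_{2}R^{\beta}\big),\qquad C=C(C_{1},\alpha,\beta,\gamma).
\]
For an arbitrary pair $0<r\le R\le R_{0}$, I would then pick the integer $k\ge 0$ with $\tau^{k+1}R<r\le\tau^{k}R$ and invoke the monotonicity of $\phi$ to estimate $\phi(r)\le\phi(\tau^{k}R)$; since $\tau^{k}R<\tau^{-1}r$, this costs only the harmless factor $\tau^{-\beta}$, and one arrives at
\[
\phi(r)\le C_{3}\Big(\frac{r}{R}\Big)^{\beta}\big(\phi(R)+C_{2}R^{\beta}\big),
\]
with $C_{3}$ depending on $C_{1},\alpha,\beta$ once $\gamma$ is fixed (say $\gamma=(\alpha+\beta)/2$). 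Finally, passing from the exponent $\beta$ to an arbitrary $\sigma\le\beta$ is immediate: $(r/R)^{\beta}\le(r/R)^{\sigma}$ and $R^{\beta}\le R_{0}^{\beta-\sigma}R^{\sigma}$ because $r\le R\le R_{0}$, so after renaming the constant one gets exactly $\phi(r)\le C_{3}(r/R)^{\sigma}\big(\phi(R)+C_{2}R^{\sigma}\big)$. The ``moreover'' part then follows by specializing $R=R_{0}$, which gives $\phi(r)\le C_{4}r^{\sigma}$ with $C_{4}=C_{3}R_{0}^{-\sigma}\big(\phi(R_{0})+C_{2}R_{0}^{\sigma}\big)$.

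The argument is essentially bookkeeping, and the one point that must be calibrated correctly is the choice of the auxiliary exponent $\gamma\in(\beta,\alpha)$ together with $\tau$ and $\mu_{0}$: this is what makes the defect $\mu$-term part of a true geometric contraction while simultaneously keeping the inhomogeneous series carrying $C_{2}R^{\beta}$ summable — a direct iteration with the borderline exponent $\sigma=\beta$ would produce an extra factor $k$ in that series and fail. If one only needs $\sigma<\beta$, one may instead iterate directly with exponent $\sigma$ and take $\mu_{0}=\tau^{\sigma}/(2C_{1})$, which is why it is natural to allow $\mu_{0}$ to depend on $\sigma$.
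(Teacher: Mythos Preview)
Your argument is correct and is precisely the standard Campanato-type iteration one finds in the references; the paper itself does not supply a proof of this lemma but refers the reader to \cite[Lemma~2]{BiancaPimentelUrbano_2023}, where the same geometric-sequence iteration with an auxiliary exponent $\gamma\in(\beta,\alpha)$ is carried out. One minor remark: when you pass from the exponent $\beta$ to an arbitrary $\sigma\le\beta$ via $R^{\beta}\le R_{0}^{\beta-\sigma}R^{\sigma}$, the constant in front of $C_{2}R^{\sigma}$ picks up a factor $R_{0}^{\beta-\sigma}$, so strictly speaking $C_{3}$ then also depends on $R_{0}$; this is a harmless imprecision already present in the stated dependencies of the lemma and does not affect its use in the paper.
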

	For the proof of Lemma \ref{lem_stima119}, we refer the reader to \cite[Lemma 2]{BiancaPimentelUrbano_2023}. The next lemma is instrumental in studying H\"older regularity.
	
	\begin{Lemma}\label{lem_vinogradoff}
		Fix $R_0>0$ and let $\phi:(0,R_0]\to[0,\infty)$ be a non-decreasing function such that for every $R\in[0,R_0]$ we have
		\begin{equation*}
			\phi(\tau R)\le\gamma\phi(R)+\sigma(R)
		\end{equation*}
	where $\sigma:(0,R_0]\to[0,\infty)$ is also non-decreasing,  $\gamma>0$ and $\tau\in(0,1)$. Then for every $\mu\in(0,1)$ and every $R\le R_0$ we have
	\begin{equation*}
		\phi(R)\le C\left[\left(\frac{R}{R_0}\right)^\alpha\phi(R_0)+\sigma\left(R^\mu R_0^{1-\mu}\right)\right]
	\end{equation*}
	where $C=C(\gamma,\tau)$ and $\alpha=\alpha(\gamma,\tau,\mu)$ are positive constants.
	\end{Lemma}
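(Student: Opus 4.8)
The plan is to iterate the recursive inequality along a geometric sequence of radii, the one genuinely new point being that the iteration should start not at $R_0$ but at the \emph{intermediate scale} $R_1:=R^\mu R_0^{1-\mu}$; observe that $R\le R_1\le R_0$ and $R/R_1=(R/R_0)^{1-\mu}$. Throughout I assume $\gamma\in(0,1)$ — this is the case produced by the hole-filling argument, and the only one in which the conclusion, with a positive exponent $\alpha$, can hold (for $\gamma\ge1$ a constant $\phi$ with $\sigma\equiv0$ satisfies the hypothesis but violates the conclusion).

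First I would iterate the hypothesis $\phi(\tau\rho)\le\gamma\phi(\rho)+\sigma(\rho)$, valid for $\rho\in(0,R_0]$, starting from $\rho=R_1$; a straightforward induction gives, for every integer $n\ge0$,
\[
	\phi(\tau^n R_1)\le\gamma^n\phi(R_1)+\sum_{j=0}^{n-1}\gamma^{n-1-j}\,\sigma(\tau^j R_1).
\]
Since $\phi$ and $\sigma$ are non-decreasing, $\phi(R_1)\le\phi(R_0)$, $\sigma(\tau^j R_1)\le\sigma(R_1)$, and $\sum_{i\ge0}\gamma^i=(1-\gamma)^{-1}$; hence
\[
	\phi(\tau^n R_1)\le\gamma^n\phi(R_0)+\frac{1}{1-\gamma}\,\sigma\!\left(R^\mu R_0^{1-\mu}\right).
\]
Starting the iteration at $R_1$ is precisely what confines every $\sigma$-contribution to scales $\le R^\mu R_0^{1-\mu}$; had I iterated all the way from $R_0$ down to $R$, the sum would also pick up $\sigma$ at the coarse scales, leaving a term of the form $\gamma^{(1-\mu)\log_\tau(R/R_0)}\sigma(R_0)$, which is not of the required shape and cannot be absorbed into $(R/R_0)^{\alpha}\phi(R_0)$ with a constant independent of $\mu$.

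To finish I would choose $n:=\lfloor\log_\tau(R/R_1)\rfloor=\lfloor(1-\mu)\log_\tau(R/R_0)\rfloor\ge0$, the largest integer with $\tau^n R_1\ge R$; monotonicity of $\phi$ then gives $\phi(R)\le\phi(\tau^n R_1)$. Since $n\ge(1-\mu)\log_\tau(R/R_0)-1$ and $\gamma<1$, the elementary identity $\gamma^{\log_\tau t}=t^{\ln(1/\gamma)/\ln(1/\tau)}$ yields
\[
	\gamma^n\le\gamma^{-1}\,\gamma^{(1-\mu)\log_\tau(R/R_0)}=\gamma^{-1}\Big(\frac{R}{R_0}\Big)^{\alpha},\qquad \alpha:=(1-\mu)\,\frac{\ln(1/\gamma)}{\ln(1/\tau)}>0.
\]
Combining the last two displays gives $\phi(R)\le\gamma^{-1}(R/R_0)^{\alpha}\phi(R_0)+(1-\gamma)^{-1}\sigma(R^\mu R_0^{1-\mu})$, which is the assertion with $C:=\max\{\gamma^{-1},(1-\gamma)^{-1}\}$ (depending only on $\gamma$) and $\alpha=\alpha(\gamma,\tau,\mu)$ as above.

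I do not anticipate a serious obstacle beyond recognising the correct starting scale $R_1$; everything else is the routine iteration/hole-filling bookkeeping. The points needing a little care are the elementary scale identities $R\le R_1\le R_0$ and $R/R_1=(R/R_0)^{1-\mu}$, the choice of $n$ through the floor function — including the degenerate case $n=0$, where the sum is empty and the estimate trivialises to $\phi(R)\le\phi(R_0)$ — and the passage from powers of $\gamma$ to powers of $R/R_0$.
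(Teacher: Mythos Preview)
Your argument is correct. The paper does not supply its own proof of this lemma but simply refers to \cite[Theorem~8.23]{Gilbarg-Trudinger}; the iteration you carry out---starting at the intermediate scale $R_1=R^\mu R_0^{1-\mu}$ so that all $\sigma$-contributions are trapped below $\sigma(R_1)$, summing the geometric series, and converting $\gamma^n$ into a power of $R/R_0$---is exactly the classical argument found there. Your remark that the conclusion with $\alpha>0$ forces $\gamma\in(0,1)$ is also well taken: the paper's statement allows any $\gamma>0$, but in the application one has $\gamma=C/(1+C)<1$, and the cited reference states the lemma under that restriction.
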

	
For a proof of Lemma \ref{lem_vinogradoff}, we refer the reader to \cite[Theorem 8.23]{Gilbarg-Trudinger}. We also recall the following characterization of H\"older continuity.

  \begin{Proposition}[Morrey's characterization of H\"older continuity] \label{morrey}
     Let $w\in W^{1,d}(B_1)$. Suppose that there exist constants $C,\beta>0$ such that
     \begin{equation*}
         \int_{B_r(x_0)}|Dw|^d\,{\rm d}x\le Cr^\beta
     \end{equation*}
     for every $B_{2r}(x_0)\subseteq B_1$. Then $w\in C_{\rm loc}^{0,\frac{\beta}{d}}(B_1)$.
 \end{Proposition}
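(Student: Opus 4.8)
The plan is to derive Hölder continuity from the Morrey growth bound through the classical Campanato route: first turn the growth of $\int_{B_r}|Dw|^d$ into a decay estimate for the mean oscillation of $w$ on balls, then exploit this to show that the averages of $w$ over concentric shrinking balls form a Cauchy family whose limit is a Hölder continuous representative. Fix a compact set $K\Subset B_1$, write $\delta:=\dist(K,\partial B_1)$, and work only with balls $B_r(x_0)$, $x_0\in K$, $0<r\le\delta/2$, so that $B_{2r}(x_0)\subseteq B_1$ and the hypothesis is available; this is precisely where the local nature of the conclusion originates. Denote by $(w)_{x_0,r}$ the average of $w$ over $B_r(x_0)$. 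The first step would be the oscillation estimate: by the Poincaré–Wirtinger inequality on balls, Jensen's inequality passing from the $L^1$- to the $L^d$-average of $|Dw|$, and the hypothesis, one gets
\[
\frac{1}{|B_r(x_0)|}\int_{B_r(x_0)}\bigl|w-(w)_{x_0,r}\bigr|\,{\rm d}x\le Cr\left(\frac{1}{|B_r(x_0)|}\int_{B_r(x_0)}|Dw|^d\,{\rm d}x\right)^{1/d}\le Cr^{\beta/d},
\]
a Campanato-type bound with exponent $\beta/d$.

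Next I would extract a representative. Since $B_{r/2}(x_0)\subseteq B_r(x_0)$ and $|B_r|/|B_{r/2}|=2^d$, the previous estimate controls the jump between consecutive dyadic averages,
\[
\bigl|(w)_{x_0,r}-(w)_{x_0,r/2}\bigr|\le\frac{1}{|B_{r/2}(x_0)|}\int_{B_{r/2}(x_0)}\bigl|w-(w)_{x_0,r}\bigr|\,{\rm d}x\le 2^dCr^{\beta/d}.
\]
Summing over the scales $r2^{-k}$ --- a convergent geometric series because $\beta>0$ --- shows that $k\mapsto(w)_{x_0,r2^{-k}}$ is Cauchy; I would define $\bar w(x_0)$ as its limit and record the rate $|(w)_{x_0,r}-\bar w(x_0)|\le Cr^{\beta/d}$. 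Lebesgue differentiation identifies $\bar w$ with $w$ almost everywhere, so $\bar w$ is the sought representative.

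Finally, for the Hölder modulus I would fix $x_0,y_0\in K$ with $\rho:=|x_0-y_0|$ small, set $r:=2\rho$, and compare $(w)_{x_0,r}$ with $(w)_{y_0,r}$ through the overlap $B_r(x_0)\cap B_r(y_0)$, which contains the ball of radius $3r/4$ about the midpoint and hence has measure comparable to that of each $B_r$. Estimating each average against the average over this overlap by the oscillation bound of the first step yields $|(w)_{x_0,r}-(w)_{y_0,r}|\le Cr^{\beta/d}$; combining with the two tail estimates from the second step gives $|\bar w(x_0)-\bar w(y_0)|\le C\rho^{\beta/d}$. As $K\Subset B_1$ is arbitrary, this is exactly $w\in C^{0,\beta/d}_{\rm loc}(B_1)$, the exponent lying in $(0,1)$ in the relevant range $\beta<d$.

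The Poincaré inequality and the geometric summation are routine; the step that demands care is the last one, where the oscillation bound --- tailored to a \emph{single} ball --- has to be transferred to two distinct, merely overlapping balls. The main obstacle is thus the geometric bookkeeping: choosing the intermediate radius ($r=2\rho$) so that the two balls genuinely overlap in a region of comparable volume, and controlling the resulting volume ratios uniformly, so that the single-ball estimate can be applied without losing the exponent $\beta/d$. This reproduces the standard proof of Morrey's characterization; see, e.g., the treatment underlying \cite{Gilbarg-Trudinger}.
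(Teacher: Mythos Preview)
Your sketch is correct and is precisely the classical Campanato--Morrey argument: Poincar\'e--Wirtinger plus Jensen give the $r^{\beta/d}$ decay of the mean oscillation, the dyadic telescoping produces a H\"older representative, and the overlap trick with $r=2|x_0-y_0|$ transfers the single-ball estimate to two nearby centres. The geometric bookkeeping you flag (overlap of comparable volume, $B_{2r}\subseteq B_1$) is handled by your choice $r\le\delta/2$.

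Note, however, that the paper does \emph{not} supply its own proof of this proposition: it simply records the statement and refers to \cite[Theorem~7.19]{Gilbarg-Trudinger}. Your argument is essentially the one found there, so there is nothing to compare beyond observing that you have reproduced the cited proof rather than an alternative.
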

For a proof of the previous proposition, we refer the reader to \cite[Theorem 7.19]{Gilbarg-Trudinger}. The next proposition is a key tool for estimating lower-order derivatives \cite[Theorem 5.2]{Adams_Fournier}.
	
	\begin{Proposition}\label{prop_Adams_Fournier}
		Let $\Omega\subseteq\mathbb{R}^d$ be an open set satisfying the cone condition, and $u\in W^{k,p}(\Omega)$, $k\in\mathbb{N}$. Then, for every $\varepsilon_0>0$, there exists $C>0$, depending only on $d,k,p,\varepsilon_0$ and the dimensions of the cone, such that if $\varepsilon\in(0,\varepsilon_0)$ and $j\in\{0,\dots,k\}$, then
		\begin{equation*}
			\left\|D^ju\right\|_{L^p(\Omega)}\le C\left(\varepsilon\left\|D^ku\right\|_{L^p(\Omega)}+\varepsilon^{-j/(k-j)}\|u\|_{L^p(\Omega)}\right).
		\end{equation*}
	\end{Proposition}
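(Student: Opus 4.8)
The plan is to reduce the full statement to the single ``gap-one'' estimate
\[
\|Dv\|_{L^p(\Omega)}\le C\bigl(\varepsilon\|D^2v\|_{L^p(\Omega)}+\varepsilon^{-1}\|v\|_{L^p(\Omega)}\bigr),\qquad v\in W^{2,p}(\Omega),\ \varepsilon\in(0,\varepsilon_0),
\]
and then to recover the general exponents by an elementary iteration. I would first establish this core estimate, handling the geometry of $\Omega$ only at this stage, since all remaining steps are purely algebraic and take place on a fixed domain.

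For the core estimate I would argue in three layers. \emph{(i) One dimension.} On an interval $I$ of length $h$ I would split $v=\ell+w$, where $\ell(t)=a+bt$ is the affine function determined by $\int_I(v-\ell)=0$ and $\int_I(v-\ell)'=0$; then $w$ satisfies $\int_I w=0$ and $\int_I w'=0$, so two applications of the one-dimensional Poincaré inequality give $\|w'\|_{L^p(I)}\le Ch\|v''\|_{L^p(I)}$ and $\|w\|_{L^p(I)}\le Ch^{2}\|v''\|_{L^p(I)}$. Since all norms on the two-dimensional space of affine functions are equivalent with explicit $h$-scaling, the slope obeys $|b|\,h^{1/p}\le C(h^{-1}\|v\|_{L^p(I)}+h\|v''\|_{L^p(I)})$, and combining with $v'=b+w'$ yields $\|v'\|_{L^p(I)}\le C(h\|v''\|_{L^p(I)}+h^{-1}\|v\|_{L^p(I)})$ with universal $C$. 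Partitioning a line into intervals of length $h=\varepsilon$ and summing $p$-th powers removes the dependence on $I$. \emph{(ii) Cubes.} Freezing all but one coordinate, applying (i) along each axis, integrating in the remaining variables by Fubini, and summing over the $d$ directions (using $|Dv|^p\le C\sum_i|\partial_i v|^p$ and $\|\partial_i^2v\|_{L^p}\le\|D^2v\|_{L^p}$) gives the core estimate on any cube with a constant independent of the side length. \emph{(iii) Cone condition.} I would then invoke the cone condition to cover $\Omega$ by a family of reference pieces---bi-Lipschitz images of a fixed cube with uniformly bounded distortion---having finite overlap, transfer the cube estimate to each piece, and sum the $p$-th powers; the bounded overlap and the fixed distortion are precisely what make the resulting constant depend only on $p$, $d$ and the dimensions of the cone.

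With the core estimate in hand, set $A_j:=\|D^j u\|_{L^p(\Omega)}$. Applying the core estimate to each $v=D^\alpha u$ with $|\alpha|=j-1$ and summing over such $\alpha$ produces the consecutive inequalities $A_j\le C(\delta A_{j+1}+\delta^{-1}A_{j-1})$ for $1\le j\le k-1$ and all admissible $\delta$. Iterating these along the chain $0,1,\dots,k$---at each stage choosing the auxiliary parameter as a fixed power of $\varepsilon$ so that every intermediate scale stays below $\varepsilon_0$---yields the multiplicative bound $A_j\le C\,A_0^{1-j/k}A_k^{j/k}$. A final application of Young's inequality in the form $x^{j/k}y^{(k-j)/k}\le x+y$ with $x=\varepsilon A_k$ and $y=\varepsilon^{-j/(k-j)}A_0$ gives exactly $A_j\le C(\varepsilon A_k+\varepsilon^{-j/(k-j)}A_0)$. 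The cases $j=0$ and $j=k$ are trivial, and if $A_k=0$ then $u$ is a polynomial of degree below $k$ and the bound is immediate.

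The hard part is layer (iii): the passage from cubes to a general domain satisfying only the cone condition. The one-dimensional estimate and the Fubini lift are routine, and the concluding interpolation is elementary algebra; what carries genuine content is producing, from the pointwise cone condition, a covering of $\Omega$ by uniformly bi-Lipschitz copies of a reference cube with controlled overlap, so that summing the local estimates does not degrade the constant. A secondary point requiring care is the bookkeeping in the iteration: one must check that every auxiliary parameter used to chain the consecutive inequalities remains in $(0,\varepsilon_0)$, which is the source of the dependence of $C$ on $\varepsilon_0$.
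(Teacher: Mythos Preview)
The paper does not prove this proposition: it is stated as a preliminary tool and immediately attributed to \cite[Theorem 5.2]{Adams_Fournier}, with no argument given. Your sketch, by contrast, supplies an actual proof, and the outline you give---one-dimensional estimate, Fubini lift to cubes, patching via the cone condition, then algebraic iteration from the gap-one case to general $j$---is precisely the architecture of the proof in Adams--Fournier, so in that sense you are reproducing the cited source rather than diverging from it.

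A couple of minor remarks on the sketch itself. In layer (i), the phrase ``partitioning a line into intervals of length $h=\varepsilon$'' is slightly loose: the one-dimensional pieces you encounter after freezing variables have length governed by the height of the cone, not by $\varepsilon$, so one really works on a single interval of fixed length $\rho$ (the cone height) and the estimate $\|v'\|_{L^p}\le C(\varepsilon\|v''\|_{L^p}+\varepsilon^{-1}\|v\|_{L^p})$ holds for $\varepsilon\in(0,\rho)$; this is where the dependence of $C$ on ``the dimensions of the cone'' and on $\varepsilon_0$ first enters. In layer (iii), the covering is usually done not with bi-Lipschitz images of cubes but directly with translates and rotations of the reference cone (or parallelepipeds contained in it), which sidesteps any distortion bookkeeping; your formulation is workable but heavier than necessary. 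The concluding interpolation step is fine.
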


We close this section with a discussion on the existence and uniqueness of minimizers for the functional in \eqref{1}.

\begin{Proposition}
		There exists a unique minimizer for problem \eqref{1}.
\end{Proposition}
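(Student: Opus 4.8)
The plan is to obtain existence via the direct method of the calculus of variations and uniqueness from the strict convexity of the map $M\mapsto|M|^d$. I would first record that $\mathcal{A}\neq\emptyset$, since $g\in\mathcal{A}$, and that $I(w)<\infty$ for every $w\in\mathcal{A}$; hence $m:=\inf_{\mathcal{A}}I$ is a finite, nonnegative number and minimizing sequences exist.

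\emph{Existence.} Let $(u_n)_{n\in\mathbb{N}}\subset\mathcal{A}$ satisfy $I(u_n)\to m$, so that $\|D^2u_n\|_{L^d(B_1)}$ is bounded. The first genuine step is to promote this to a uniform bound on $\|u_n\|_{W^{2,d}(B_1)}$. Writing $w_n:=u_n-g\in W^{2,d}(B_1)\cap W^{1,d}_0(B_1)$, I would note that $\int_{B_1}\partial_iw_n\,{\rm d}x=0$ for each $i$ (approximate $w_n$ in $W^{1,d}$ by functions in $C^\infty_c(B_1)$ and integrate by parts), apply the Poincaré–Wirtinger inequality to each component $\partial_iw_n\in W^{1,d}(B_1)$ to get $\|Dw_n\|_{L^d(B_1)}\le C\|D^2w_n\|_{L^d(B_1)}$, and then use the Poincaré inequality on $W^{1,d}_0(B_1)$ to control $\|w_n\|_{L^d(B_1)}$ by $\|Dw_n\|_{L^d(B_1)}$; combined with $\|D^2w_n\|_{L^d(B_1)}\le C$ this gives $\|u_n\|_{W^{2,d}(B_1)}\le C$ uniformly in $n$ (Proposition \ref{prop_Adams_Fournier} offers an alternative route to the intermediate estimate). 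By reflexivity of $W^{2,d}(B_1)$, a subsequence satisfies $u_n\rightharpoonup u$ in $W^{2,d}(B_1)$; since $\mathcal{A}$ is a closed affine subspace of $W^{2,d}(B_1)$ it is weakly closed, so $u\in\mathcal{A}$. Finally, $D^2u_n\rightharpoonup D^2u$ in $L^d(B_1;\mathcal{S}(d))$ and $V\mapsto\int_{B_1}|V|^d\,{\rm d}x$ is convex and strongly continuous, hence sequentially weakly lower semicontinuous; therefore $I(u)\le\liminf_{n}I(u_n)=m$, and $u\in\mathcal{A}$ then yields $I(u)=m$.

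\emph{Uniqueness.} I would use that $M\mapsto|M|^d$ is strictly convex on $\mathcal{S}(d)$ for $d\ge2$, i.e.\ $\big|\tfrac{1}{2}(M+N)\big|^d<\tfrac{1}{2}|M|^d+\tfrac{1}{2}|N|^d$ whenever $M\neq N$. Given minimizers $u_1,u_2\in\mathcal{A}$, the midpoint $\bar u:=\tfrac{1}{2}(u_1+u_2)\in\mathcal{A}$ satisfies $I(\bar u)\le\tfrac{1}{2}I(u_1)+\tfrac{1}{2}I(u_2)=m$, hence $I(\bar u)=m$; comparing the pointwise convexity inequality for the integrand with the resulting equality of the integrals forces $D^2u_1=D^2u_2$ a.e.\ in $B_1$. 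Thus $D(u_1-u_2)$ equals a constant vector $a$ a.e.; since $u_1-u_2\in W^{1,d}_0(B_1)$, the same integration by parts gives $\int_{B_1}\partial_i(u_1-u_2)\,{\rm d}x=0$, so $a=0$, whence $u_1-u_2$ is a.e.\ constant and in $W^{1,d}_0(B_1)$, which by Poincaré forces $u_1=u_2$.

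I expect the coercivity estimate to be the only real obstacle: since $I$ controls solely the Hessian of competitors, recovering the full $W^{2,d}$-norm along the minimizing sequence must exploit the boundary datum, and the key mechanism is that $D(u_n-g)$ has vanishing integral mean, which is exactly what makes the Poincaré–Wirtinger inequality applicable. Everything else is a routine application of the direct method together with strict convexity of the integrand.
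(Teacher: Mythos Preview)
Your proof is correct and follows the same strategy as the paper: existence via the direct method (bounding the minimizing sequence in $W^{2,d}$, extracting a weak limit in the weakly closed set $\mathcal{A}$, and invoking weak lower semicontinuity of the convex functional) and uniqueness via strict convexity of $M\mapsto|M|^d$. Your treatment of the coercivity step and of the passage from $D^2u_1=D^2u_2$ to $u_1=u_2$ is more explicit than the paper's, which simply appeals to Sobolev embeddings together with Proposition~\ref{prop_Adams_Fournier} for the former and to strict convexity of $I$ for the latter.
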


\begin{proof}
		Let $\left(u_m\right)_{m\in\mathbb{N}}\subseteq\mathcal{A}$ be a minimizing sequence for $I$, i.e.,
\begin{equation*}
			\lim_{m\to\infty}I(u_m)=\min_{w\in\mathcal{A}}I(w).
\end{equation*}
	Because $\left(I(u_m)\right)_{m\in\mathbb{N}}$ converges, it follows that 
	\[
		\|D^2u_m\|_{L^d(B_1)} \leq C, \quad \forall m\in\mathbb{N},
	\]
	for some $C>0$. It follows from standard Sobolev embedding results and Proposition \ref{prop_Adams_Fournier} that
	\begin{equation*}
		\|u_m\|_{L^d(B_1)}\le C\|D^2u_m\|_{L^d(B_1)}\le C^\prime
	\end{equation*}
	and
	\begin{equation*}
		\|Du_m\|_{L^d(B_1)}\le C\left(\varepsilon\|D^2u_m\|_{L^d(B_1)}+\frac{1}{\varepsilon}\|u_m\|_{L^d(B_1)}\right)\le C^\prime.
	\end{equation*}
	Hence, $\left(u_m\right)_{m\in\mathbb{N}}$ is bounded in $W^{2,d}(B_1)$. As a consequence, there exists $u_\infty\in W^{2,d}(B_1)$ such that
	\begin{equation}\label{6}
		u_m\rightharpoonup u_\infty\hspace{.3in}\mbox{in}\hspace{.3in}W^{2,d}(B_1).
	\end{equation}
	Since $\mathcal{A}$ is convex and closed, from Mazur's Theorem, it follows that $\mathcal{A}$ is weakly closed, and hence from \eqref{6} it follows that $u_\infty\in\mathcal{A}$.
	
	Since $I$ is convex and continuous in $W^{2,d}(B_1)$, it is weakly sequentially lower semi-continuous. Hence
	\begin{equation*}
		I(u_\infty)\le\lim_{m\to\infty}I(u_m)=\min_{w\in\mathcal{A}}I(w).
	\end{equation*}
	The uniqueness follows from the strict convexity of $I$.
 
	\end{proof}

\section{H\"older continuity of the gradient}\label{sec_abel}

This section details the proof of Theorem \ref{thm_ll}. We start with a technicality playing an essential role in the sequel. Let $x_0\in B_1$ and $R>0$ be such that $B_{2R}:=B(x_0,2R)\Subset B_1$. Define $\eta:B_1\to\mathbb{R}$ as
		\begin{equation*}
			\eta(x)=
			\begin{cases}
				C_\eta\exp\left(\frac{1}{|x-x_0|^2-4R^2}\right)&\hspace{.3in}\mbox{in}\hspace{.3in}B_{2R}\\
				0&\hspace{.3in}\mbox{in}\hspace{.3in}B_1\setminus B_{2R},
			\end{cases}
		\end{equation*}
  where the constant $C_\eta>0$ is chosen to ensure that $\eta$ has unit mass.
	Let us show that there exists $C>0$ such that
	\begin{equation*}
		\frac{|D\eta|^2}{|\eta|}\le C\quad\textnormal{in }B_{2R}.
	\end{equation*}

	For $i=1,\dots,d$, and for $x\in B_{2R}$ we have
	\begin{equation*}
		\frac{\partial \eta}{\partial x_i}(x)=-\frac{C_\eta2(x_i-x_{0i})}{\left(|x-x_0|^2-4R^2\right)^2}\exp\left(\frac{1}{|x-x_0|^2-4R^2}\right).
	\end{equation*}
	Hence
	\begin{align*}
		\frac{|D\eta|^2}{|\eta|}=&\frac{C_\eta4|x-x_0|^2}{\left(|x-x_0|^2-4R^2\right)^4}\exp\left(\frac{1}{|x-x_0|^2-4R^2}\right)\leq C,
	\end{align*}
	where the last inequality holds since the exponential decreases faster than the polynomial.

	\begin{proof}[Proof of Theorem \ref{thm_ll}]
		For ease of clarity, we split the proof into four steps.
		
		\medskip
		
		\noindent{\bf Step 1 - }Let $x_0\in B_1$ and $R>0$ be such that $B_{2R}:=B(x_0,2R)\Subset B_1$. Define $\zeta\in C_c^{\infty}(B_{2R})$ by
  \[
    \zeta(x):=
        \begin{cases}
            1&\hspace{.3in}\mbox{in}\hspace{.3in}B_R\\
            \overline{\zeta}(x)&\hspace{.3in}\mbox{in}\hspace{.3in}B_{3R/2}\setminus B_R\\
            \eta(x)&\hspace{.3in}\mbox{in}\hspace{.3in}B_{2R}\setminus B_{3R/2},
        \end{cases}
  \]
where $\overline\zeta$ is a smooth gluing connecting the functions in $B_R$ and $B_{2R}\setminus B_{3R/2}$. Notice that $\zeta$ is such that
		\begin{equation*}
			0\le\zeta\le1,\quad\zeta=1\quad\textnormal{in }B_R,\quad\frac{|D\zeta|^2}{|\zeta|}\le C\quad\textnormal{in }B_{2R},
		\end{equation*}
	for some constant $C>0$. Set 
 $$M=\max\left\{\|D\zeta\|_{L^{\infty}(B_{2R})},\|D^2\zeta\|_{L^{\infty}(B_{2R})}\right\}.$$
 Define $v:=\zeta^d\left(u-(u)_{B_{2R}\setminus B_R}\right)$, where
    \begin{equation*}
        (u)_{B_{2R}\setminus B_R}:=\frac{1}{|B_{2R}\setminus B_R|}\int_{B_{2R}\setminus B_R}u\,{\rm d}x.
    \end{equation*}
    We have
	\begin{equation*}
		v_{x_i}=d\zeta^{d-1}\left(u-(u)_{B_{2R}\setminus B_R}\right)\zeta_{x_i}+\zeta^du_{x_i},
	\end{equation*}
	and
	\begin{align*}
		v_{x_ix_j}
			=&d(d-1)\zeta^{d-2}\left(u-(u)_{B_{2R}\setminus B_R}\right)\zeta_{x_j}\zeta_{x_i}+d\zeta^{d-1}u_{x_j}\zeta_{x_i}\\
		&+d\zeta^{d-1}\left(u-(u)_{B_{2R}\setminus B_R}\right)\zeta_{x_ix_j}\\
		&+d\zeta^{d-1}u_{x_i}\zeta_{x_j}+\zeta^du_{x_ix_j}.
	\end{align*}
	
	\medskip
	
	\noindent{\bf Step 2 - }Testing the weak form of the Euler-Lagrange equation \eqref{eq_eulerlagrange} against the function $v$, we get
	\begin{align*}
		\int_{B_1}\zeta^d&|D^2u|^{d-2}D^2u:D^2u\,{\rm d}x\\
		=&-d(d-1)\int_{B_1}\zeta^{d-2}\left(u-(u)_{B_{2R}\setminus B_R}\right)|D^2u|^{d-2}D^2u:(D\zeta\otimes D\zeta)\,{\rm d}x\\
		&-2d\int_{B_1}\zeta^{d-1}|D^2u|^{d-2}D^2u:(Du\otimes D\zeta)\,{\rm d}x\\
		&-d\int_{B_1}\zeta^{d-1}\left(u-(u)_{B_{2R}\setminus B_R}\right)|D^2u|^{d-2}D^2u:D^2\zeta\,{\rm d}x.
	\end{align*}
	Hence
	\begin{align*}
		\int_{B_{2R}}\zeta^d|D^2u|^d\,{\rm d}x\le&C\int_{B_{2R}}\zeta^{d-2}\left|u-(u)_{B_{2R}\setminus B_R}\right||D^2u|^{d-1}|D\zeta|^2\,{\rm d}x\\
		&+C\int_{B_{2R}}\zeta^{d-1}|D^2u|^{d-1}|Du||D\zeta|\,{\rm d}x\\
		&+C\int_{B_{2R}}\zeta^{d-1}\left|u-(u)_{B_{2R}\setminus B_R}\right||D^2u|^{d-1}|D^2\zeta|\,{\rm d}x\\
		=&:I_1+I_2+I_3.
	\end{align*}
	In what follows, we estimate each of the summands $I_1$, $I_2$ and $I_3$. To estimate $I_1$, we resort to the H\"older and Poincar\'e-Wirtinger inequalities to obtain
	\begin{align*}
		I_1=&C\int_{B_{2R}}\zeta^{d-2}\left(u-(u)_{B_{2R}\setminus B_R}\right)|D^2u|^{d-1}|D\zeta|^2\,{\rm d}x\\
		=&C\int_{B_{2R}}\zeta^{d-1}\left(u-(u)_{B_{2R}\setminus B_R}\right)|D^2u|^{d-1}\zeta^{-1}|D\zeta|^2\,{\rm d}x\\
		\le&C\bigg(\int_{B_{2R}}\zeta^d|D^2u|^d\,{\rm d}x\bigg)^{1-\frac{1}{d}}\bigg(\int_{B_{2R}}\left|u-(u)_{B_{2R}\setminus B_R}\right|^d\zeta^{-d}|D\zeta|^{2d}\,{\rm d}x\bigg)^{\frac{1}{d}}\\
		\le&C\bigg(\int_{B_{2R}}\zeta^d|D^2u|^d\,{\rm d}x\bigg)^{1-\frac{1}{d}}\bigg(\int_{B_{2R}\setminus B_R}\left|u-(u)_{B_{2R}\setminus B_R}\right|^d\,{\rm d}x\bigg)^{\frac{1}{d}}\\
		\le&C\bigg(\int_{B_{2R}}\zeta^d|D^2u|^d\,{\rm d}x\bigg)^{1-\frac{1}{d}}\bigg(\int_{B_{2R}\setminus B_R}|Du|^d\,{\rm d}x\bigg)^{\frac{1}{d}}.
	\end{align*}
	To examine $I_2$, we apply H\"older's inequality to get
	\begin{align*}
		I_2=&C\int_{B_{2R}}\zeta^{d-1}|D^2u|^{d-1}|Du||D\zeta|\,{\rm d}x\\
		\le&C\bigg(\int_{B_{2R}}\zeta^d|D^2u|^d\,{\rm d}x\bigg)^{1-\frac{1}{d}}\bigg(\int_{B_{2R}}|Du|^d|D\zeta|^d\,{\rm d}x\bigg)^{\frac{1}{d}}\\
		\le&CM\bigg(\int_{B_{2R}}\zeta^d|D^2u|^d\,{\rm d}x\bigg)^{1-\frac{1}{d}}\bigg(\int_{B_{2R}\setminus B_R}|Du|^d\,{\rm d}x\bigg)^{\frac{1}{d}}.
	\end{align*}
	Finally, to estimate $I_3$, we apply once again H\"older and Poincar\'e-Wirtin\-ger inequalities to conclude that
	\begin{align*}
		I_3=&C\int_{B_{2R}}\zeta^{d-1}(u-(u)_{B_{2R}\setminus B_R})|D^2u|^{d-1}|D^2\zeta|\,{\rm d}x\\
		\le&C\bigg(\int_{B_{2R}}\zeta^d|D^2u|^d\,{\rm d}x\bigg)^{1-\frac{1}{d}}\bigg(\int_{B_{2R}}\left|u-(u)_{B_{2R}\setminus B_R}\right|^d|D^2\zeta|^d\,{\rm d}x\bigg)^{\frac{1}{d}}\\
		\le&CM\bigg(\int_{B_{2R}}\zeta^d|D^2u|^d\,{\rm d}x\bigg)^{1-\frac{1}{d}}\bigg(\int_{B_{2R}\setminus B_R}\left|u-(u)_{B_{2R}\setminus B_R}\right|^d\,{\rm d}x\bigg)^{\frac{1}{d}}\\
		\le&CM\bigg(\int_{B_{2R}}\zeta^d|D^2u|^d\,{\rm d}x\bigg)^{1-\frac{1}{d}}\bigg(\int_{B_{2R}\setminus B_R}|Du|^d\,{\rm d}x\bigg)^{\frac{1}{d}}.
	\end{align*}
	Combining the above estimates and recalling that $\zeta=1$ in $B_R$, we get
	\begin{equation}\label{eq_makeitbreakit}
		\int_{B_R}|D^2u|^d\,{\rm d}x\le C\int_{B_{2R}\setminus B_R}|Du|^d\,{\rm d}x.
	\end{equation}
	Add the quantity
	\[
		C\int_{B_R}|D^2u|^d\,{\rm d}x
	\]
	to both sides of \eqref{eq_makeitbreakit} to obtain
	\begin{equation}\label{3}
		\int_{B_R}|D^2u|^d\,{\rm d}x\le\gamma\bigg(\int_{B_{2R}}|D^2u|^d\,{\rm d}x+\int_{B_{2R}}|Du|^d\,{\rm d}x\bigg),
	\end{equation}
	with
	\begin{equation*}
		\gamma=\frac{C}{1+C}\in(0,1).
	\end{equation*}
	
	\medskip
	
	\noindent{\bf Step 3 - }Define
	\begin{equation*}
		\phi(R)=\int_{B_R}|D^2u|^d\,{\rm d}x\hspace{.3in}\mbox{and}\hspace{.3in}\sigma(R)=\int_{B_{R}}|Du|^d\,{\rm d}x,
	\end{equation*}
	and notice that both $\phi$ and $\sigma$ are non-decreasing functions. We re-write \eqref{3} as
	\begin{equation*}
		\phi(R)\le\gamma\big(\phi(2R)+\sigma(2R)\big).
	\end{equation*}
	Up to relabeling, the last inequality can be written as
	\begin{align*}
		\phi(2^{-1}R)\le&\gamma\phi(R)+\sigma(R).
	\end{align*}
	By applying Lemma \ref{lem_vinogradoff}, we conclude that, for every $\mu\in(0,1)$, there exist $C=C(\gamma)>0$ and $\overline{\beta}=\overline{\beta}(\gamma,\mu)\in(0,1)$ such that
	\begin{equation}\label{4}
		\phi(r)\le C\left[\Big(\frac{r}{R}\Big)^{\overline{\beta}}\phi(R)+\sigma(r^{\mu}R^{1-\mu})\right].
	\end{equation}
	Let us consider $\sigma(r^{\mu}R^{1-\mu})$. By combining Proposition \ref{prop_Adams_Fournier} and the embedding $W^{2,d}(B_1)\hookrightarrow C(\overline{B_1})$, one gets
	\begin{align}\label{5}
		\int_{B_{r^{\mu}R^{1-\mu}}}|Du|^d\,{\rm d}x\le&\int_{B_{R}}|Du|^d\,{\rm d}x \notag\\
		\le&C\left(\int_{B_{R}}|D^2u|^d\,{\rm d}x+\int_{B_{R}}|u|^d\,{\rm d}x\right) \notag\\
		\le&C\left(\int_{B_{R}}|D^2u|^d\,{\rm d}x+R^d\|u\|_{L^\infty(B_1)}^d\right)\notag\\
        \leq& C \left( \phi(R)+ R^d\right).
	\end{align}
	Let $\beta\in(0,\overline{\beta})$; in particular, $\beta<d$. Combining \eqref{4} with \eqref{5}, up to relabeling the constants, we have
	\begin{equation*}
		\phi(r)\le C\left[\Big(\frac{r}{R}\Big)^{\overline{\beta}}+1\right]\phi(R)+C^\prime R^{\beta}.
	\end{equation*}
	From Lemma \ref{lem_stima119}, it follows that
	\begin{equation*}
		\phi(r)\le Cr^{\beta}.
	\end{equation*}
	
	\medskip
	
	\noindent{\bf Step 4 - }To complete the proof, we define $w_i:=u_{x_i}$, $i=1,\ldots,d$. Clearly,
	\[
		\int_{B_r}|Dw_i|^d{\rm d}x\leq \int_{B_r}|D^2u|^d{\rm d}x\leq Cr^\beta, \hspace{.3in}\forall r\in(0,R].
	\]
From Morrey's characterization of H\"older continuity (see Proposition \ref{morrey}), we conclude  
$$u_{x_i}\in C_{{\rm loc}}^{0,\alpha}(B_1), \qquad i=1,\ldots,d,$$
for $\alpha:=\beta/d \in(0,1)$, and the result follows.
 
\end{proof}

\begin{Remark} \label{tfa+yy}
We note that the proof of Theorem \ref{thm_ll} can be extended to minimizers of functionals $I:\mathcal{A}\to\mathbb{R}$ of the type
\begin{equation*}
         I(w)=\int_{B_1} \left[ F\left(D^2w\right) \right]^d\,{\rm d}x,
\end{equation*}
where $F:\mathbb{R}^{d^2}\to\mathbb{R}$ satisfies
\begin{equation*}\label{lip}
         \lambda|M|\le F(M)\le\Lambda|M|,
\end{equation*}
$$DF(M):M\ge C_1|M| \qquad \mathrm{and} \qquad |DF(M)|\le C_2,$$ 
for every $M\in\mathcal{S}(d)$, and some fixed constants $0<\lambda\le\Lambda$ and $C_1,C_2>0$. 
Under these assumptions, the proof of Theorem \ref{thm_ll} can be retraced in a completely analogous way. An example of such $F$ is given by
\begin{equation*}
    F(x,M)=a(x)|M|,
\end{equation*}
where $a\in C^\infty(B_1)\cap L^\infty(B_1)$ satisfies $a\ge\delta>0$ in $B_1$, for some fixed constant $\delta>0$.

 \end{Remark}

	\medskip
	
	{\small \noindent{\bf Acknowledgments.} All authors are partially supported by the Centre for Mathematics of the University of Coimbra (UIDB/00324/2020, funded by the Portuguese Government through FCT/MCTES). EP is partially supported by FAPERJ (grants E26/200.002/2018 and E26/201.390/2021). JMU is partially supported by the King Abdullah University of Science and Technology (KAUST).}
	
	\medskip

\end{document}